\newtheorem{thm}{Theorem}
\newtheorem{prop}[thm]{Proposition}
\newtheorem{lem}[thm]{Lemma}
\newcommand{\set}[1]{\left\{{#1}\right\}}
\newcommand{\abs}[1]{\left\vert{#1}\right\vert}
\newcommand{\card}[1]{\left\vert{#1}\right\vert}
\newcommand{\C}{\mathbb C}
\newcommand{\R}{\mathbb R}
\newcommand{\Z}{\mathbb Z}
\newcommand{\res}[1]{|_{#1}}
\newcommand{\M}{\mathcal M}
\newcommand{\Fab}{F_{a,b}}
\newcommand{\Pab}{P_{a,b}}
\newcommand{\Int}{\mathop{\mathrm{Int}}\nolimits}
\newcommand{\Ind}{\mathop{\mathrm{Ind}}\nolimits}
\newcommand{\Res}{\mathop{\mathrm{Res}}\nolimits}
\begin{document}

\title[Principal series for Thompson's groups]{Analogs of principal series representations for Thompson's groups $F$ and $T$}

\author{{\L}ukasz Garncarek}
\address{University of Wroc{\l}aw, Institute of Mathematics, pl.~Grunwaldzki 2/4, 50-384 Wroc{\l}aw, Poland}
\email{Lukasz.Garncarek@math.uni.wroc.pl}

\subjclass[2010]{22D10}
\keywords{Thompson's group, principal series representation, induced representation}

\begin{abstract}
We define series of representations of the Thompson's groups $F$ and $T$, which are analogs of principal series representations of $SL(2,\R)$. We show that they are irreducible and classify them up to unitary equivalence. We also prove that they are different from representations induced from finite-dimensional representations of stabilizers of points under natural actions of $F$ and $T$ on the unit interval and the unit circle, respectively.
\end{abstract}

\maketitle

\section{Introduction}

Suppose that a group $G$ acts on a measure space $(X,\mu)$, leaving $\mu$ quasi-invariant. Such an action gives rise to a series of unitary representations $\pi_s$ on the complex Hilbert space $L^2(X,\mu)$, given by
\begin{equation}\label{eq:rep_defn}
\pi_s(\gamma) f = \left(\frac{d\gamma_*\mu}{d\mu}\right)^{1/2+is}  f\circ\gamma^{-1}, 
\end{equation}
where $s\in\R$. If $G$ is a semisimple Lie group, a construction of this type can lead to a part of the principal series of $G$, and therefore to a large family of pairwise inequivalent irreducible representations (see e.g.~\cite{bekka2008kazhdan}, Appendix E).

If $G$ is a ``large'' group, such as the group of diffeomorphisms of a smooth manifold, the representations of the form~\eqref{eq:rep_defn} are irreducible (\cite{VerGelGra1982,Gar}). In this paper we study the irreducibility of representations of Thompson's groups $F$ and $T$ acting on the unit interval $I$ and unit circle $S^1$. Common wisdom says that they are ``large'' subgroups of the groups of piecewise linear homeomorphisms of $I$ and $S^1$. One might therefore expect that the corresponding representations are irreducible. In Section~\ref{sec:irrineq} we show that this is the case. Moreover, we show that the only equivalences between them arise from trivial reasons.

The main example we had in mind when considering the representations~\eqref{eq:rep_defn} was the principal series of $SL(2,\R)$. It consists of two parts, one of which is obtained from the action of $SL(2,\R)$ on the one-point compactification of $\R$ by M\"obius transformations. The same representations can be obtained through induction from a unitary character of the subgroup $P\leq SL(2,\R)$ consisting of upper-triangular matrices. This subgroup happens to be the stabilizer of the point $\infty$, hence it can be recovered from the action of $SL(2,\R)$ on $\R\cup\set{\infty}$.

It turns out that the analogy between the representations $\pi_s$ of Thompson's groups, and the principal series representations of $SL(2,\R)$ breaks here. In Section~\ref{sec:repind} we will show that there is a large class of pairwise inequivalent irreducible representations of $F$, obtained through induction from finite-dimensional unitary representations of stabilizers of points. This class, however, does not contain any of the representations $\pi_s$.

It is easy to see that if the restrictions of representations $\pi_s$ to a subgroup of $G$ are irreducible or inequivalent, then so are the original representations. It follows that the analogs of the principal series representations of many naturally occurring groups which contain $F$, such as the group of piecewise linear or piecewise projective homeomorphisms of the unit interval, are irreducible.

The author wishes to thank Jan Dymara and Tadeusz Januszkiewicz for helpful conversations and remarks.

\section{Preliminaries}

\subsection{Thompson's groups $F$ and $T$} \label{subsec:defFT}
The Thompson's group $F$ consists of all orientation-preserving piecewise linear homeomorphisms $\phi$ of the unit interval $[0,1]$ such that
\begin{enumerate}
\item $\phi$ preserves the set $\Z[1/2]\cap[0,1]$ of dyadic rational numbers in $[0,1]$,
\item $\phi$ has finitely many nondifferentiability points, which all lie in $\Z[1/2]$,
\item the slopes of $\phi$ are of the form $2^k$ for $k\in\Z$.
\end{enumerate}

The surjection $\eta\colon [0,1]\to S^1=\{z\in\C : \abs{z}=1\}$ given by $\eta(t)=e^{2\pi it}$ induces a faithful action of $F$ on $S^1$. Denote by $R$ the group of rotations of $S^1$ with angles in $\pi\Z[1/2]$. The Thompson's group $T$ is the group of homeomorphisms of $S^1$ generated by $F$, viewed as a group of homeomorphisms of $S^1$, and $R$. We will identify $F$ with the subgroup of $T$ fixing the point $1\in S^1$. 

\subsection{Technical lemmas}

\begin{lem}\label{lem:const}
Let $I\subseteq[0,1]$ be a nondegenerate closed interval of length less than $1$. If for a function $f\in L^1 ([0,1])$ there exists a dense subset $D$ of $S=\{s\in\R : s+I \subseteq [0,1]\}$ such that for every $s\in D$ we have $f(x)=f(x+s)$ almost everywhere on $I$, then $f$ is constant.
\end{lem}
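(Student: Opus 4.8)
The plan is to first strengthen the hypothesis so that the identity $f(x)=f(x+s)$ holds for \emph{every} $s\in S$ (not only for $s\in D$), and then to combine the resulting translation invariance with the connectedness of $[0,1]$. Write $I=[a,b]$, so that $S=[-a,1-b]$ is itself a nondegenerate interval, of length $\abs{S}=1-(b-a)>0$. Extend $f$ to a function $\tilde f\in L^1(\R)$ by setting $\tilde f=0$ outside $[0,1]$; since $I\subseteq[0,1]$ and $s+I\subseteq[0,1]$ for every $s\in S$, we have
\[
  h(s):=\int_I\abs{f(x+s)-f(x)}\,dx=\int_I\abs{\tilde f(x+s)-\tilde f(x)}\,dx
  \qquad(s\in S).
\]

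The first step is to show that $h$ is continuous on $S$. This is precisely the continuity of translation in $L^1$: the map $s\mapsto\tilde f(\,\cdot+s)$ is norm-continuous from $\R$ into $L^1(\R)$, hence $s\mapsto\tilde f(\,\cdot+s)\res{I}$ is continuous into $L^1(I)$, and $h(s)$ is the $L^1(I)$-distance between this restriction and $f\res{I}$. By hypothesis $h$ vanishes on the dense set $D\subseteq S$, so $h\equiv0$ on $S$; equivalently, $f(x)=f(x+s)$ for a.e.\ $x\in I$, now for \emph{all} $s\in S$.

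The second step converts this ``for all $s$, a.e.\ $x$'' statement into an ``a.e.\ $x$, for a.e.\ $s$'' statement via Tonelli's theorem. The integrand $(x,s)\mapsto\abs{\tilde f(x+s)-\tilde f(x)}$ is jointly measurable on $I\times S$, being built from $\tilde f$ and the linear automorphism $(x,s)\mapsto(x+s,s)$ of $\R^2$, and its inner integral over $I$ equals $h(s)=0$ for every $s$. Hence it vanishes almost everywhere on $I\times S$, and Fubini's theorem yields a full-measure set $G\subseteq I$ with the property that for each $x_0\in G$ one has $f(y)=f(x_0)$ for a.e.\ $y\in x_0+S$. In other words, $f$ is a.e.\ equal to the constant $c(x_0):=f(x_0)$ on the interval $x_0+S$, which has positive length $\abs{S}$.

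The final step glues these local statements together. Two intervals $x_0+S$ and $x_1+S$ with $x_0,x_1\in G$ overlap in a set of positive measure exactly when $\abs{x_0-x_1}<\abs{S}$, and on any such overlap the values $c(x_0)$ and $c(x_1)$ must coincide. Since $G$ has full measure in $[a,b]$ it is dense there, and $\abs{S}>0$, so any two points of $G$ may be linked by a finite chain in $G$ with consecutive gaps below $\abs{S}$; thus $c$ is a single constant $c$. Choosing a countable dense subset of $G$, the corresponding intervals $x_0+S$ already cover $(0,1)$, and $f$ equals $c$ a.e.\ on each of them, so $f\equiv c$ a.e.\ on $[0,1]$. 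The only genuinely delicate point is the first step: the passage from $D$ to $S$ is what forces the use of continuity of translation and explains why density of $D$, rather than its mere nonemptiness, is essential.
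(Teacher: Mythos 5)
Your proof is correct, and it takes a route that differs from the paper's in an instructive way, even though both arguments turn on the crux you identify at the end: upgrading the hypothesis from the dense set $D$ to all of $S$ by a continuity-in-$s$ argument. The paper does this upgrade \emph{weakly}: it shows that $f_\phi(s)=\int_I f(x+s)\phi(x)\,dx$ is continuous for each continuous test function $\phi$, hence constant on $S$; it then approximates characteristic functions of subintervals $J\subseteq I$ by such $\phi$ to conclude that $\int_{J+s}f$ is independent of $s\in S$, and from these integral identities it deduces directly that every average $\frac{1}{b-a}\int_a^b f$ equals $\int_0^1 f$, i.e.\ $f$ is constant --- no pointwise statement about $f$ is ever needed. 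You instead do the upgrade \emph{strongly}, via norm-continuity of translation in $L^1$ applied to $h(s)=\int_I\abs{f(x+s)-f(x)}\,dx$, which yields the genuinely pointwise fact that $f(\cdot+s)=f$ a.e.\ on $I$ for \emph{every} $s\in S$; converting that into constancy then costs you the Tonelli/Fubini step (whose joint-measurability issue you correctly dispatch with the automorphism $(x,s)\mapsto(x+s,s)$) and the chaining/covering argument, where you rightly pass to a countable dense subset of $G$ to avoid an uncountable union of null sets. The trade-off: the paper's weak formulation buys a shorter endgame with no measurability bookkeeping, while your strong formulation buys a stronger intermediate conclusion (actual a.e.\ translation invariance of $f$ on $I$) at the price of more measure-theoretic care --- care which you do take correctly, including the overlap criterion $\abs{x_0-x_1}<\abs{S}$ and the covering of $(0,1)$ by countably many translates $x_0+S$.
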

\begin{proof}
For any continuous function $\phi\in C(I)$ we may define $f_\phi \colon S\to\C$ by
\begin{equation}
f_\phi(s) = \int_I f(x+s)\phi(x)\,dx = \int_{I+s} f(x)\phi(x-s)\,dx.
\end{equation}
Then $f_\phi$ is continuous, and constant on $D$, hence constant on $S$. For any subinterval $J\subseteq I$ we may take a uniformly bounded sequence $\phi_n\in C(I)$ converging pointwise to the characteristic function $\chi_J$, obtaining, by the dominated convergence theorem, that the value of the integral $\int_{J+s} f(x)\,dx$ is independent of $s\in S$. It is now easy to check that 
\begin{equation} \int_a^b f(x)\,dx = \frac{1}{b-a} \int_0^1 f(x)\,dx
\end{equation}
for any $0\leq a < b \leq 1$, hence $f$ is constant.
\end{proof}
\begin{lem}[\cite{CFP}, Lemma 4.2]\label{lem:F_exist}
If $0=x_0 < x_1 < x_2 < \cdots < x_n=1$ and $0=y_0 < y_1 < y_2 < \cdots < y_n=1$ are partitions of the interval $[0,1]$ consisting of dyadic rational numbers, then there exists $\gamma\in F$ such that $\gamma(x_i)=y_i$ for all $i=0,1,\ldots,n$. Furthermore, if $x_{i-1}=y_{i-1}$ and $x_i=y_i$ for some $i$, then $\gamma$ can be chosen so that $\gamma(x)=x$ for $x\in[x_{i-1},x_i]$.
\end{lem}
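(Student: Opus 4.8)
The plan is to construct $\gamma$ separately on each subinterval $[x_{i-1},x_i]$, mapping it onto $[y_{i-1},y_i]$, and then glue these pieces together. Because consecutive pieces share the endpoints $x_i$, all sent consistently to $y_i$, the glued map is automatically a well-defined orientation-preserving homeomorphism of $[0,1]$, and it lies in $F$ as soon as each piece is piecewise linear with dyadic breakpoints and slopes of the form $2^k$, $k\in\Z$, preserving $\Z[1/2]$. This reduces the statement to the following single-interval claim: for dyadic $a<b$ and dyadic $c<d$ in $[0,1]$ there is an orientation-preserving piecewise linear homeomorphism $[a,b]\to[c,d]$ with dyadic breakpoints, slopes that are integer powers of $2$, and preserving the dyadic rationals. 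Granting this, the ``Furthermore'' part is immediate: whenever $x_{i-1}=y_{i-1}$ and $x_i=y_i$ I simply take the identity on $[x_{i-1},x_i]$, which is of the required form and matches its neighbours at the shared endpoints.

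To prove the single-interval claim I would first record that every dyadic interval can be cut into \emph{standard dyadic intervals}, i.e.\ intervals of the form $[k/2^m,(k+1)/2^m]$. Indeed, writing $a=p/2^N$ and $b=q/2^N$, the interval $[a,b]$ is the union of the $q-p$ consecutive standard dyadic intervals $[k/2^N,(k+1)/2^N]$ for $p\leq k<q$, and similarly for $[c,d]$. The key observation is that the number of pieces in such a decomposition can be increased by one at will: halving any standard dyadic interval occurring in it replaces that interval by two standard dyadic intervals. Hence $[a,b]$ admits a partition into exactly $M$ standard dyadic intervals for every $M$ at least as large as its minimal such number, and likewise for $[c,d]$; choosing a common $M$, I obtain partitions of $[a,b]$ and of $[c,d]$ into the \emph{same} number $M$ of standard dyadic intervals.

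Now map the $j$-th interval of the partition of $[a,b]$ affinely and orientation-preservingly onto the $j$-th interval of the partition of $[c,d]$. An affine bijection between standard dyadic intervals $[k/2^p,(k+1)/2^p]\to[l/2^q,(l+1)/2^q]$ has slope $2^{p-q}$, dyadic endpoints, and sends $\Z[1/2]$ into $\Z[1/2]$; gluing the $M$ pieces, which agree at the shared partition points taken in increasing order, yields the desired map $[a,b]\to[c,d]$, completing the claim and hence the lemma. The construction involves no serious obstacle, and the only point requiring genuine care is the bookkeeping guaranteeing that $[a,b]$ and $[c,d]$ can be subdivided into the same number of standard dyadic intervals, which is exactly what the halving observation supplies; everything else reduces to the elementary verifications of continuity at the glue points and of the slope and dyadic-preservation conditions for affine maps between standard dyadic intervals.
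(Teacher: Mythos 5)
Your proof is correct and is essentially the paper's argument in expanded form: the paper writes the lengths $x_i-x_{i-1}$ and $y_i-y_{i-1}$ as sums $2^{\alpha_1}+\cdots+2^{\alpha_k}$ with equally many summands and maps corresponding pieces affinely, and your subdivision into standard dyadic intervals together with the halving observation is exactly the same mechanism (halving is precisely how one equalizes the number of summands), just with the pieces required in addition to be aligned at points $k/2^m$. The gluing reduction to single subintervals and taking the identity where $x_{i-1}=y_{i-1}$ and $x_i=y_i$ also match the paper's treatment of the ``furthermore'' clause, so there is nothing to correct.
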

\begin{proof}
This follows from the observation that any two dyadic rational numbers can be written in the form $2^{\alpha_1}+2^{\alpha_2}+\cdots+2^{\alpha_k}$, where $\alpha_i\in\Z$, with the same number of summands. Such decompositions of $x_i-x_{i-1}$ and $y_i-y_{i-1}$ allow us to define $\gamma$ on $[x_{i-1},x_i]$.
\end{proof}
In particular, Lemma~\ref{lem:F_exist} implies that for any closed interval $I\subseteq(0,1)$, and $h\in\Z[1/2]$ such that $I+h\subseteq (0,1)$, there exists $\gamma_{I,h}\in F$ satisfying 
\begin{equation}
\gamma_{I,h}(x)=x+h
\end{equation}  
for $x\in I$. 

\section{A series of representations of the group $F$} \label{sec:irrineq}

The Lebesgue measure on $[0,1]$ is quasi-invariant under the action of $F$, which allows us to consider a series of representations $\pi_s$ of $F$, given by formula~\eqref{eq:rep_defn}. In this section we will show that they are irreducible and pairwise inequivalent, except for trivial cases. 

\subsection{Restrictions on intertwining operators}
For $0\leq a<b\leq 1$ we define $\Fab$ as the subgroup of $F$ consisting of homeo\-morphisms which are identity outside $[a,b]$, and $\Pab$ as the orthogonal projection onto the space $L^2([a,b])$, treated as a subspace of $L^2([0,1])$. Finally, for $\phi\in L^\infty([0,1])$, the corresponding multiplication operator on $L^2([0,1])$ is denoted by $M_\phi$, and the algebra of all such operators by $\M$. It is a maximal commutative operator algebra on $L^2([0,1])$ (e.g. \cite{Ped}, Proposition 4.7.6), hence $\M=\M'$. Moreover, $\M$ is equal to the von Neumann algebra generated by the operators $P_{a,b}$. 

\begin{lem}\label{lem:pres_Lab}
Fix $s\in\R$. If $f\in L^2([0,1])$ satisfies $\pi_s(\gamma)f=f$ for all $\gamma\in\Fab$, then $f\res{[a,b]} = 0$.
\end{lem}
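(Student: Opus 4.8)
The plan is to exploit the two kinds of elements that $\Fab$ contains: those acting as a pure translation on a subinterval, which constrain $f$ through a relation of the form $f(x)=f(x+h)$, and those carrying a slope different from $1$, which will then force the resulting constant to vanish because of the Radon--Nikodym factor $(d\gamma_*\mu/d\mu)^{1/2+is}$. First I would record what the fixed-point equation means pointwise. Writing $\psi=\gamma^{-1}$, formula~\eqref{eq:rep_defn} turns $\pi_s(\gamma)f=f$ into $\psi'(x)^{1/2+is}f(\psi(x))=f(x)$ for almost every $x$. If $\gamma\in\Fab$ acts as $x\mapsto x+h$ on a closed interval $I$, then $\psi$ is translation by $-h$ on $I+h$, where $\psi'\equiv 1$, so the equation collapses to $f(x)=f(x+h)$ almost everywhere on $I$.

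Next I would manufacture enough such translations to apply Lemma~\ref{lem:const}. Fix dyadic numbers $a<a'<b'<b$ and a closed dyadic interval $I=[c,d]\subseteq(a',b')$. For every dyadic $h$ with $I+h\subseteq[a',b']$, Lemma~\ref{lem:F_exist} together with its ``furthermore'' clause (used to keep the map equal to the identity on $[0,a']$ and on $[b',1]$) produces a homeomorphism supported in $[a',b']\subseteq[a,b]$, hence an element of $\Fab$, which translates $I$ by $h$; this is the localized analog of the element $\gamma_{I,h}$ from the remark following Lemma~\ref{lem:F_exist}. As $h$ ranges over these dyadic shifts it fills a dense subset of $\set{h : I+h\subseteq[a',b']}$, and each contributes a relation $f(x)=f(x+h)$ on $I$. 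After rescaling $[a',b']$ affinely onto $[0,1]$ (so that $f\res{[a',b']}$ becomes an $L^1$ function and $I$ becomes an interval of length less than $1$), Lemma~\ref{lem:const} shows that $f$ is constant on $[a',b']$. Letting $a'\downarrow a$ and $b'\uparrow b$ and matching the constants on overlapping subintervals, I conclude that $f$ equals a single constant $c$ almost everywhere on $[a,b]$.

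Finally I would eliminate the constant. Any nontrivial element of $\Fab$ has a linear piece $J$ on which its slope equals $2^k$ with $k\neq 0$, and such elements exist because $(a',b')$ contains dyadic intervals. Choosing $\gamma\in\Fab$ supported in $[a',b']$ whose inverse $\psi$ has constant slope $2^{-k}$ on $J$, with $\psi(J)\subseteq[a',b']\subseteq[a,b]$, the fixed-point equation on $J$ reads $2^{-k(1/2+is)}c=c$. Since $\abs{2^{-k(1/2+is)}}=2^{-k/2}\neq 1$, the factor differs from $1$, and therefore $c=0$, that is, $f\res{[a,b]}=0$.

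I expect the main obstacle to be the middle step: constructing translation elements that are genuinely supported inside $[a,b]$ for a set of shifts dense enough to feed Lemma~\ref{lem:const}, and handling the fact that the translates $I+h$ can never reach the endpoints of $[a,b]$ while $\gamma$ stays in $\Fab$. I plan to circumvent this boundary issue precisely by exhausting the open interval $(a,b)$ with the nested intervals $[a',b']$, applying Lemma~\ref{lem:const} on each, rather than attempting to apply it once to all of $[a,b]$.
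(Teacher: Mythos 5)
Your proof is correct, but it takes a genuinely different route from the paper's. The paper's own proof is a short compression argument: assuming $f\res{[a,b]}\ne 0$, it picks a dyadic interval $[c,d]\subset(a,b)$ with $f\res{[c,d]}\ne 0$ and, by Lemma~\ref{lem:F_exist}, elements $\gamma_n\in\Fab$ fixing $c$ and linear with slope $2^{-n}$ on $[c,d]$; a change of variables then gives $\int_c^d\abs{\pi_s(\gamma_n^{-1})f}^2=\int_c^{c+2^{-n}(d-c)}\abs{f}^2\to 0$ as $n\to\infty$, contradicting $\pi_s(\gamma_n^{-1})f=f$. Taking moduli makes the phase $(\cdot)^{is}$ disappear, so that argument needs neither translations nor Lemma~\ref{lem:const}. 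Your argument instead front-loads the machinery the paper reserves for Propositions~\ref{prop:irreps} and~\ref{prop:equiv}: localized translations plus Lemma~\ref{lem:const} (after affine rescaling and exhaustion of $(a,b)$ by intervals $[a',b']$) show that $f$ is a.e.\ constant on $[a,b]$, and then any element of $\Fab$ with a linear piece of slope $2^k$, $k\ne 0$, forces the constant to vanish because $\abs{2^{-k(1/2+is)}}=2^{-k/2}\ne 1$. Both steps are sound. The one point requiring care --- which you correctly flag --- is the construction of translations supported in $[a',b']$: the literal statement of Lemma~\ref{lem:F_exist} only prescribes values at partition points, so you need the observation from its proof that two pieces of equal dyadic length can be matched by a translation, combined with the ``furthermore'' clause to remain the identity outside $[a',b']$; this is the same observation that underlies the paper's own remark about $\gamma_{I,h}$, so it is a legitimate (if mild) strengthening. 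As for what each approach buys: the paper's proof is three lines and uses only square-integrability; yours is longer but yields the stronger intermediate statement that every $\Fab$-invariant vector is constant, hence zero, on $[a,b]$, and it makes explicit where the real part $1/2$ of the exponent enters (nonunit modulus of the Radon--Nikodym factor on pieces of slope $\ne 1$), a fact the paper's version exploits implicitly through the Jacobian in its change of variables.
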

\begin{proof}
If $f\res{[a,b]}\ne 0$, then there exist dyadic rational numbers $c<d$ such that $[c,d]\subset  (a,b)$ and $f\res{[c,d]}\ne 0$. By Lemma~\ref{lem:F_exist}, for a positive integer $n$ there exists $\gamma_n \in \Fab$ such that $\gamma_n(c)=c$ and $\gamma_n\res{[c,d]}$ is linear with slope $2^{-n}$. 

We have
\begin{equation}
\int_c^d \abs{\pi_s(\gamma_n^{-1})f}^2 = \int_c^{c+2^{-n}(d-c)} \abs{f}^2 \xrightarrow[n\to\infty]{} 0,
\end{equation}
and therefore $f \ne \pi_s(\gamma_n^{-1})f$ for some $n$.
\end{proof}

\begin{lem}\label{lem:int_diag}
For any $s,t\in \R$ the space $\Int(\pi_s,\pi_t)$ of intertwining operators is contained in $\M$.
\end{lem}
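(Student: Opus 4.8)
The plan is to use the maximality of $\M$. Since $\M$ is maximal commutative we have $\M=\M'$, so in order to prove $\Int(\pi_s,\pi_t)\subseteq\M$ it is enough to show that an arbitrary $T\in\Int(\pi_s,\pi_t)$ commutes with every element of $\M$, and because $\M$ is the von Neumann algebra generated by the projections $\Pab$, it suffices to prove that $T$ commutes with each $\Pab$ for $0\le a<b\le 1$.

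The key observation is a representation-theoretic description of the range of $1-\Pab$, that is, of the subspace of functions vanishing almost everywhere on $[a,b]$. I claim that this subspace coincides with the space of $\Fab$-fixed vectors for $\pi_s$, and crucially that this description makes no reference to $s$. One inclusion is immediate from~\eqref{eq:rep_defn}: if $f\res{[a,b]}=0$ and $\gamma\in\Fab$ is the identity outside $[a,b]$, then on $[a,b]$ both $f$ and $f\circ\gamma^{-1}$ vanish, while outside $[a,b]$ the Radon--Nikodym factor equals $1$ and $\gamma^{-1}$ is the identity, so $\pi_s(\gamma)f=f$. The reverse inclusion is exactly Lemma~\ref{lem:pres_Lab}. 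Hence $\{f : \pi_s(\gamma)f=f\text{ for all }\gamma\in\Fab\}=\operatorname{ran}(1-\Pab)$, and the identical statement holds with $t$ in place of $s$.

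Now take $T\in\Int(\pi_s,\pi_t)$ and $f\in\operatorname{ran}(1-\Pab)$. For every $\gamma\in\Fab$ we have $\pi_t(\gamma)Tf=T\pi_s(\gamma)f=Tf$, so $Tf$ is a $\pi_t(\Fab)$-fixed vector, and by the paragraph above $Tf\in\operatorname{ran}(1-\Pab)$. Thus $T$ leaves $\operatorname{ran}(1-\Pab)$ invariant, which is to say $\Pab T(1-\Pab)=0$. Applying the same reasoning to the adjoint $T^*\in\Int(\pi_t,\pi_s)$ (using that $\Int(\pi_s,\pi_t)^*=\Int(\pi_t,\pi_s)$ since the $\pi_s$ are unitary) gives $\Pab T^*(1-\Pab)=0$, and taking adjoints yields $(1-\Pab)T\Pab=0$. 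These two relations together force $T\Pab=\Pab T$, and since $a,b$ are arbitrary we conclude $T\in\M'=\M$.

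I expect the only real subtlety to be the symmetric handling of $T$ and $T^*$. Lemma~\ref{lem:pres_Lab} by itself only yields invariance of $\operatorname{ran}(1-\Pab)$ under $T$, i.e.\ the vanishing of a single off-diagonal block; it is the passage to the adjoint that supplies the complementary block and thereby upgrades mere invariance of the subspace into genuine commutation with the projection $\Pab$.
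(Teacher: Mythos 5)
Your proof is correct and follows essentially the same route as the paper: identify $\operatorname{ran}(1-\Pab)$ with the space of $\Fab$-fixed vectors via Lemma~\ref{lem:pres_Lab}, deduce that any intertwiner commutes with every $\Pab$, and then invoke $\M=\{\Pab\}''=\M'$ to conclude. The paper compresses the commutation step into one sentence; your explicit handling of $T^*$ to obtain both off-diagonal blocks is exactly the detail being elided there.
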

\begin{proof}
From Lemma~\ref{lem:pres_Lab} it follows that any $T\in \Int(\pi_s,\pi_t)$ commutes with the projections $\Pab$. We therefore have
\begin{equation}
    \M = \{\Pab : 0\leq a < b \leq 1\}'' \subseteq \Int(\pi_s,\pi_t)',
\end{equation}
so $\Int(\pi_s,\pi_t) \subseteq \Int(\pi_s,\pi_t)'' \subseteq \M' = \M$.
\end{proof}

\subsection{Irreducibility and non-equivalence}
\begin{prop}\label{prop:irreps}
The representations $\pi_s$ are irreducible for all $s\in\R$.
\end{prop}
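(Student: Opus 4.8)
The plan is to show that the commutant of $\pi_s$ is trivial, i.e.\ that $\Int(\pi_s,\pi_s) = \C\cdot I$. By Lemma~\ref{lem:int_diag} every intertwining operator $T\in\Int(\pi_s,\pi_s)$ lies in $\M$, so $T = M_\phi$ for some $\phi\in L^\infty([0,1])$. It therefore suffices to prove that any such $\phi$ is constant almost everywhere.

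First I would translate the commutation relation $M_\phi\pi_s(\gamma) = \pi_s(\gamma)M_\phi$ into a pointwise condition on $\phi$. Writing out $\pi_s(\gamma)f = (d\gamma_*\mu/d\mu)^{1/2+is}\, f\circ\gamma^{-1}$ and comparing the two sides applied to an arbitrary $f\in L^2([0,1])$, the Radon--Nikodym factors cancel and one is left with $\phi = \phi\circ\gamma^{-1}$ almost everywhere. Since this holds for every $\gamma\in F$ and $F$ is a group, the condition is equivalent to the $F$-invariance $\phi\circ\gamma = \phi$ a.e.\ for all $\gamma\in F$.

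Next I would exploit the homeomorphisms $\gamma_{I,h}$ produced by Lemma~\ref{lem:F_exist} and the remark following it. Fix a nondegenerate closed interval $I\subseteq(0,1)$ of length less than $1$, and let $D$ be the set of dyadic $h\in\Z[1/2]$ with $I+h\subseteq(0,1)$; this $D$ is dense in $S=\set{s\in\R : s+I\subseteq[0,1]}$. For each $h\in D$, invariance under $\gamma_{I,h}$ gives $\phi(x+h) = \phi(\gamma_{I,h}(x)) = \phi(x)$ for almost every $x\in I$, i.e.\ $\phi(x)=\phi(x+h)$ a.e.\ on $I$. Applying Lemma~\ref{lem:const} to $\phi\in L^\infty([0,1])\subseteq L^1([0,1])$ then forces $\phi$ to be constant, so $T=M_\phi$ is a scalar and $\pi_s$ is irreducible.

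The computations are short; the main thing to get right is the first step, verifying that the Jacobian multipliers $(d\gamma_*\mu/d\mu)^{1/2+is}$ cancel so that the intertwining condition collapses to pure translation-invariance of $\phi$ on small intervals, together with checking that the admissible dyadic translations are genuinely dense in $S$ so that Lemma~\ref{lem:const} applies.
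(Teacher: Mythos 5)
Your proof is correct and follows essentially the same route as the paper: Lemma~\ref{lem:int_diag} reduces the intertwiner to a multiplication operator $M_\phi$, the commutation relation collapses to $\phi=\phi\circ\gamma^{-1}$, and the elements $\gamma_{I,h}$ combined with Lemma~\ref{lem:const} force $\phi$ to be constant. The only cosmetic differences are that the paper tests the relation against the single function $1$ rather than arbitrary $f\in L^2([0,1])$, and that you spell out the density of the admissible dyadic translations in $S$, which the paper leaves implicit.
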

\begin{proof}
Let $T\in\Int(\pi_s,\pi_s)$. It follows from Lemma~\ref{lem:int_diag} that there exists $\phi\in L^\infty([0,1])$ such that $T=M_\phi$. The condition $T\pi_s(\gamma)1=\pi_s(\gamma)T1$ implies
\begin{equation}
    \phi =\phi\circ\gamma^{-1}
\end{equation}
for every $\gamma\in F$. This is satisfied in particular by the elements $\gamma_{I,h}$, so by Lemma~\ref{lem:const} the function $\phi$ is constant and $T$ is a scalar operator.
\end{proof}

\begin{prop} \label{prop:equiv}
The representations $\pi_s$ and $\pi_t$ are unitarily equivalent if and only if $s-t=2k\pi/\log 2$ for some $k\in\Z$.
\end{prop}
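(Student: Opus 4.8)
The plan is to exploit the fact that for every $\gamma\in F$ the Radon--Nikodym derivative $\frac{d\gamma_*\mu}{d\mu}$ is piecewise constant with values in $\set{2^k : k\in\Z}$, since the slopes of elements of $F$ are integer powers of $2$. Writing $D_\gamma=\frac{d\gamma_*\mu}{d\mu}$, this observation settles the direction ``$\Leftarrow$'' at once: if $s-t=2k\pi/\log 2$, then on any region where $D_\gamma=2^n$ we have $D_\gamma^{i(s-t)}=e^{in(s-t)\log 2}=e^{2\pi i kn}=1$, so the cocycles defining $\pi_s$ and $\pi_t$ coincide, $\pi_s(\gamma)=\pi_t(\gamma)$ for every $\gamma$, and the two representations are literally equal, hence unitarily equivalent. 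This is the only ``trivial'' source of equivalence.

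For the direction ``$\Rightarrow$'' I would use that each $\pi_s$ is irreducible (Proposition~\ref{prop:irreps}), so $\pi_s$ and $\pi_t$ are unitarily equivalent precisely when $\Int(\pi_s,\pi_t)$ contains a nonzero operator $T$. By Lemma~\ref{lem:int_diag} such a $T$ equals $M_\phi$ for some nonzero $\phi\in L^\infty([0,1])$, and the relation $T\pi_s(\gamma)=\pi_t(\gamma)T$ rewrites, after cancelling the common factor $f\circ\gamma^{-1}$, as the functional equation
\begin{equation}
\phi = \left(\frac{d\gamma_*\mu}{d\mu}\right)^{i(t-s)}\,\phi\circ\gamma^{-1}
\qquad\text{for all }\gamma\in F.
\end{equation}
The difficulty here is that this equation couples a unimodular multiplier with the substitution $\phi\circ\gamma^{-1}$, and the two effects must be separated. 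I would first apply it to the partial translations $\gamma_{I,h}$: on the image interval $I+h$ the slope is $1$, so there the multiplier equals $1$ and the equation reduces to $\phi(x+h)=\phi(x)$ almost everywhere on $I$. Letting $h$ range over the dyadic numbers with $I+h\subseteq(0,1)$ and invoking Lemma~\ref{lem:const} exactly as in the proof of Proposition~\ref{prop:irreps}, this forces $\phi$ to be a nonzero constant $c$.

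Finally I would feed the constant $\phi\equiv c$ back into the functional equation for an element $\gamma\in F$ whose slope equals $2$ on some interval of positive length (such elements exist by the definition of $F$, or by Lemma~\ref{lem:F_exist}). On the corresponding image set one has $\frac{d\gamma_*\mu}{d\mu}=2^{-1}$, so the equation becomes $c=e^{-i(t-s)\log 2}\,c$; since $c\neq 0$ this yields $e^{i(t-s)\log 2}=1$, i.e.\ $(t-s)\log 2\in 2\pi\Z$, which is exactly $s-t=2k\pi/\log 2$. The only genuinely delicate points are the bookkeeping of the Radon--Nikodym derivative under $\gamma^{-1}$ (to confirm the multiplier is $1$ on the translated interval and $2^{-1}$ on the image of a slope-$2$ interval) and the reuse of Lemma~\ref{lem:const}; once $\phi$ is pinned down to a constant, the arithmetic quantization drops out immediately from the $2^\Z$-valued nature of the cocycle.
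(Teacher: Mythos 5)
Your proof is correct and follows essentially the same route as the paper's: Lemma~\ref{lem:int_diag} to identify the intertwiner as a multiplication operator $M_\phi$, the partial translations $\gamma_{I,h}$ together with Lemma~\ref{lem:const} to force $\phi$ to be constant, and finally the $2^{\Z}$-valued Radon--Nikodym cocycle to quantize $s-t$. The only cosmetic differences are that the paper starts from a unitary intertwiner (so no appeal to Proposition~\ref{prop:irreps} is needed) and leaves the final slope-$2$ computation implicit, whereas you spell it out.
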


\begin{proof}
If $s-t = 2k\pi/\log 2$ then $\left({d\gamma_*\mu}/{d\mu}\right)^{1/2+is} = \left({d\gamma_*\mu}/{d\mu}\right)^{1/2+it}$ and the representations are equivalent.

Now, suppose that there exists a unitary intertwining operator $U\in\Int(\pi_s,\pi_t)$. Then $U=M_\phi$, where $\phi\in L^\infty([0,1])$, and 
\begin{equation}\label{eq:uint}
    \frac{\phi\circ\gamma^{-1}}{\phi} = \left(\frac{d\gamma_*\mu}{d\mu}\right)^{i(s-t)}
\end{equation}
for every $\gamma\in F$. Putting $\gamma_{I,h}$ into~\eqref{eq:uint} we get $\phi(x)=\phi(x+h)$ almost everywhere on $I$. Therefore, by Lemma~\ref{lem:const} the function $\phi$ is constant. From~\eqref{eq:uint} it now follows that $(d\gamma_*\mu/d\mu)^{i(s-t)}$ is constant for all $\gamma\in F$, which is possible only if $s-t=2k\pi/\log 2$ for some $k\in\Z$.
\end{proof}

\section{Representations induced from stabilizers of points} \label{sec:repind}

\subsection{Irreducibility and nonequivalence criterion}
Let $H_1,H_2 \leq G$ be discrete groups, and let $\sigma_i$ be an irreducible finite-dimensional unitary representation of $H_i$. Denote $\rho_i=\Ind_{H_i}^G\sigma_i$. The following theorem is a slightly weaker version of Theorems 1--3 in~\cite{Obata1989}:

\begin{thm}\label{thm:obata}\ 
\begin{enumerate}
\item If $[H_1 : H_1 \cap gH_1g^{-1}] = \infty$ for every $g\in G - H_1$, then $\rho_1$ is irreducible. 
\item If $H_1=H_2$, and $[H_1 : H_1 \cap gH_1g^{-1}] = \infty$ for every $g\in G - H_1$, then $\rho_1$ and $\rho_2$ are inequivalent whenever $\sigma_1$ is not equivalent to $\sigma_2$,   
\item If $[H_1 : H_1 \cap gH_2g^{-1}] = \infty$ for every $g\in G$, then $\rho_1$ and $\rho_2$ are not equivalent.
\end{enumerate}
\end{thm}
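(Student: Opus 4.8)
The plan is to compute the space of bounded intertwining operators $\Int(\rho_1,\rho_2)$ through the Mackey double-coset description, and then to isolate the contribution of each double coset using Schur's lemma together with an $\ell^2$-summability argument. All three statements follow from a single such computation, so I would treat parts (1)--(3) uniformly rather than separately.

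First I would fix the concrete model: realize $\rho_i=\Ind_{H_i}^G\sigma_i$ on the Hilbert space $\mathcal H_i$ of functions $f\colon G\to V_i$ satisfying the covariance $f(xh)=\sigma_i(h)^{-1}f(x)$ for $h\in H_i$ and $\sum_{xH_i}\|f(x)\|^2<\infty$, with $G$ acting by left translation. A bounded $T\in\Int(\rho_1,\rho_2)$ is determined by a $G$-invariant kernel, equivalently by an ``intertwining field'' $\Psi\colon G\to\mathrm{Hom}(V_1,V_2)$ whose support is a union of double cosets in $H_2\backslash G/H_1$ and which satisfies $\Psi(h_2 g h_1)=\sigma_2(h_2)\Psi(g)\sigma_1(h_1)$. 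Since $G$ is discrete and the $\sigma_i$ are finite-dimensional, this is the standard summable-induction setting and no analytic subtleties beyond boundedness arise.

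Next I would extract two local constraints on $\Psi$ at a representative $g$ of a double coset $D=H_2 g H_1$. Applying the covariance to the stabilizer forces $\Psi(g)$ to lie in $\Int_{H_2\cap gH_1g^{-1}}(\sigma_2|,\sigma_1^g|)$, where $\sigma_1^g(k)=\sigma_1(g^{-1}kg)$; this is a finite-dimensional Hom-space. The second, decisive, constraint is boundedness of $T$: evaluating $Tf$ on an input supported on a single $H_1$-coset, its values over the $H_1$-orbit inside $D$ all have norm comparable to $\|\Psi(g)\|$ by unitarity of the $\sigma_i$, so membership in $\mathcal H_2$ forces $\Psi(g)=0$ as soon as that orbit is infinite, i.e.\ as soon as the relevant index $[H_1:H_1\cap g^{-1}H_2g]$ (equivalently the index appearing in the hypotheses, after replacing $g$ by $g^{-1}$) is infinite.

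With these two constraints the theorem is immediate. For (1) and (2), taking $H_1=H_2=H$, the identity double coset contributes exactly $\Int_H(\sigma_1,\sigma_2)$, which by Schur's lemma equals $\C$ when $\sigma_1=\sigma_2$ and $0$ when $\sigma_1\not\cong\sigma_2$; the hypothesis $[H:H\cap gHg^{-1}]=\infty$ for $g\in G-H$ kills every other double coset, yielding irreducibility in (1) and $\Int(\rho_1,\rho_2)=0$ in (2). For (3) the hypothesis makes the index infinite for \emph{every} $g$, including the identity, so no double coset survives and $\Int(\rho_1,\rho_2)=0$. The main obstacle is the careful justification of the intertwining-field description together with the boundedness step in the infinite discrete setting---making precise that bounded intertwiners correspond exactly to summable fields with the stated covariance, and that the infinite-orbit $\ell^2$ obstruction is the only mechanism forcing vanishing---which is precisely the content of the Mackey--Kleppner theory of intertwining forms for summable induced representations underlying Obata's theorems.
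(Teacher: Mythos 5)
The paper does not prove this theorem at all: it is quoted as a weakened form of Theorems 1--3 of Obata's paper, and the citation is the entire ``proof.'' So the relevant comparison is with Obata's argument, and your proposal is essentially a reconstruction of it: Obata's proofs run exactly through the Mackey--Kleppner description of intertwining operators for induced representations of discrete groups, localization on double cosets $H_2\backslash G/H_1$, the finite-dimensional intertwining constraint at each coset representative, and the $\ell^2$-divergence obstruction when the relevant index is infinite. Your reduction of all three parts to a single computation of $\Int(\rho_1,\rho_2)$, with Schur's lemma handling the identity double coset, is the standard and correct route; note also that you only need the map $T\mapsto\Psi$ to be linear and injective (which holds because functions supported on a single left $H_1$-coset span a dense subspace of $\mathcal H_1$), not a bijection onto all covariant fields, so no converse construction is required.

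One step as written is not literally correct and needs repair. You claim that for an input supported on a single coset, the values of $Tf$ over the $H_1$-orbit inside $D=H_2gH_1$ ``all have norm comparable to $\|\Psi(g)\|$ by unitarity.'' For a \emph{fixed} vector $v\in V_1$ this fails: the values are $\Psi(g)\sigma_1(h)v$ as $h$ runs over coset representatives of $(H_1\cap g^{-1}H_2g)\backslash H_1$, and $\sigma_1(h)v$ may wander into (or near) the kernel of $\Psi(g)$, so individual norms can be arbitrarily small. The correct fix uses finite-dimensionality of $\sigma_1$: sum over an orthonormal basis $v_1,\dots,v_n$ of $V_1$, so that $\sum_j\|\Psi(g)\sigma_1(h)v_j\|^2$ equals the squared Hilbert--Schmidt norm of $\Psi(g)\sigma_1(h)$, which by unitarity of $\sigma_1(h)$ is the constant $\|\Psi(g)\|_{HS}^2$ independent of $h$. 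If the orbit is infinite and $\Psi(g)\neq0$, then for at least one $j$ the sum $\sum_h\|\Psi(g)\sigma_1(h)v_j\|^2$ diverges, contradicting $Tf_{v_j}\in\mathcal H_2$; hence $\Psi(g)=0$. This is exactly where the finite-dimensionality hypothesis enters irreplaceably, so it is worth making explicit; with this patch, your argument is complete and matches the mechanism behind Obata's theorems.
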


\subsection{Representations induced from stabilizers of points}
For $p\in(0,1)$ let $F_p$ denote the stabilizer of the point $p$ under the action of $F$. Notice that if $p\not\in \Z[1/2]$, the map 
\begin{equation}
    \gamma \mapsto (\log_2 \gamma'_+(0), \log_2\gamma'(p), \log_2\gamma'_-(1))
\end{equation}
is a homomorphism from $F_p$ onto $\Z^3$ (for $p\in\Z[1/2]$ we just take two one-sided derivatives at $p$, obtaining a homomorphism onto $\Z^4$). This homomorphism allows us to construct a large family of characters of $F_p$.

\begin{lem}\label{lem:obata_asmp}
Let $p,q\in(0,1)$.
\begin{enumerate}
\item If $p$ and $q$ belong to different orbits of $F$, then $[F_p : F_p \cap \gamma F_q \gamma^{-1}] = \infty$ for every $\gamma\in F$.
\item $[F_p : F_p \cap \gamma F_p \gamma^{-1}] = \infty$ for every $\gamma\in F - F_p$.
\end{enumerate}
\end{lem}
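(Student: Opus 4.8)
The plan is to reduce both statements to a single geometric fact about orbits, and then to verify that fact by a direct dynamical construction.

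First I would rewrite the conjugate subgroup. For any $\gamma\in F$ one has $\gamma F_q\gamma^{-1}=F_{\gamma(q)}$, since $\delta$ fixes $\gamma(q)$ if and only if $\gamma^{-1}\delta\gamma$ fixes $q$. Writing $r=\gamma(q)$ in case (1) and $r=\gamma(p)$ in case (2), the intersection $F_p\cap\gamma F_q\gamma^{-1}=F_p\cap F_r$ is exactly the stabilizer of $r$ inside the group $F_p$. By the orbit--stabilizer relation, the index $[F_p:F_p\cap F_r]$ equals the cardinality of the $F_p$-orbit of $r$. Thus everything reduces to showing that this orbit is infinite, which I would establish under the single hypothesis $r\neq p$. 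This hypothesis holds in both cases: in (1), $r=\gamma(q)$ lies in the $F$-orbit of $q$, which is disjoint from that of $p$, so $r\neq p$; in (2), $\gamma\notin F_p$ means precisely $\gamma(p)\neq p$, i.e.\ $r\neq p$.

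It remains to prove that for $p\neq r$ in $(0,1)$ the $F_p$-orbit of $r$ is infinite. Assume first $p<r$, and pick a dyadic rational $d$ with $p<d<r$, which exists by density of $\Z[1/2]$. Since $d$ is dyadic, $F_{d,1}$ is a copy of $F$ acting on $[d,1]$, and every element of $F_{d,1}$ fixes $[0,d]\ni p$, so $F_{d,1}\subseteq F_p$. I would then exhibit an element $\sigma\in F_{d,1}$ that is fixed-point free on the open interval $(d,1)$, for instance the image of the standard generator of $F$ under the affine rescaling of $[0,1]$ onto $[d,1]$. Such a $\sigma$ satisfies $\sigma(t)-t\neq 0$ throughout $(d,1)$, hence has constant sign there; consequently the sequence $\sigma^n(r)$ is strictly monotone and the points $\sigma^n(r)\in(d,1)$ are pairwise distinct. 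This produces infinitely many elements of the $F_p$-orbit of $r$. The case $r<p$ is symmetric, using a dyadic $d$ with $r<d<p$ and the subgroup $F_{0,d}\subseteq F_p$.

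Combining the two steps gives $[F_p:F_p\cap\gamma F_q\gamma^{-1}]=\abs{F_p\cdot r}=\infty$ in both cases, as required. The only real content lies in the last construction, so the main obstacle is to produce the fixed-point-free $\sigma$ honestly inside $F$: one must check that rescaling by the dyadic endpoint $d$ keeps all slopes in $2^{\Z}$ and all breakpoints in $\Z[1/2]$, so that $\sigma$ genuinely belongs to $F_{d,1}$ while fixing $p$. The orbit--stabilizer reduction and the monotonicity argument are then routine.
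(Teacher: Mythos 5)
Your proof is correct, and its skeleton coincides with the paper's: you rewrite $\gamma F_q\gamma^{-1}$ as $F_{\gamma(q)}$, use orbit--stabilizer to identify $[F_p:F_p\cap F_r]$ with $\card{F_p\cdot r}$, and reduce both cases to the single claim that the $F_p$-orbit of any $r\in(0,1)$ with $r\ne p$ is infinite. Where you genuinely differ is in how that claim is verified. The paper dispatches it in one line by citing Lemma~\ref{lem:F_exist}: elements of $F$ that fix a dyadic interval containing $p$ pointwise can move $r$ to infinitely many distinct points, because $F$ acts transitively on dyadic partitions while fixing a common subinterval. You instead build one explicit element $\sigma\in F_{d,1}\subseteq F_p$ (the affine rescaling to $[d,1]$ of a standard generator) with no fixed point in $(d,1)$, and iterate: since $\sigma(t)-t$ has constant sign on $(d,1)$, the points $\sigma^n(r)$ are strictly monotone, hence pairwise distinct. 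Both arguments are sound; yours is self-contained and dynamical (it does not use Lemma~\ref{lem:F_exist} at all, only the routine check that the rescaled generator has dyadic breakpoints and power-of-two slopes, which you correctly flag), while the paper's is shorter because Lemma~\ref{lem:F_exist} is already on hand. One small imprecision on your side: for dyadic $d$ the affine conjugate $\alpha F\alpha^{-1}$ is in general a \emph{proper} subgroup of $F_{d,1}$, since the inverse rescaling $t\mapsto(t-d)/(1-d)$ need not preserve dyadicity; so ``$F_{d,1}$ is a copy of $F$'' via this rescaling is loose. But your argument only needs the inclusion $\alpha F\alpha^{-1}\subseteq F_{d,1}$ (slopes are unchanged under affine conjugation and $\alpha$ maps dyadics to dyadics), so nothing breaks.
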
 
\begin{proof}
The group $F_p\cap F_q$ is the stabilizer of $q$ under the action of $F_p$, hence the index $[F_p : F_p\cap F_q]$ equals the cardinality of the orbit $F_pq$. It follows from Lemma~\ref{lem:F_exist} that $\card{F_pq} = \infty$ for any $q\not\in \{0,p,1\}$.

If $p$ and $q$ belong to different orbits of $F$, then $p\ne\gamma(q)$ for any $\gamma\in F$, and if $\gamma \in F-F_p$, then $p\ne\gamma(p)$, hence both conclusions follow.
\end{proof}

If $H$ is a subgroup of a discrete group $G$, and $\sigma$ is a unitary representation of $gHg^{-1}$, where $g\in G$, we define a unitary representation $\sigma^g$ of $H$ by $\sigma^g(x) = \sigma(gxg^{-1})$. It is irreducible if and only if $\sigma$ is irreducible. Moreover, it is not hard to check that the induced representations $\Ind_{gHg^{-1}}^G\sigma$ and $\Ind_H^G\sigma^g$ are equivalent.

\begin{prop}\label{prop:indreps_F}
Let $p,q\in (0,1)$, and let $\sigma_p$ and $\sigma_q$ be irreducible finite dimensional unitary representations of $F_p$ and $F_q$, respectively. Then the representations $\Ind_{F_p}^F \sigma_p$ and $\Ind_{F_q}^F \sigma_q$ are irreducible. Moreover, they are equivalent if and only if there exists $\gamma\in F$ such that $q = \gamma(p)$ and $\sigma_p$ is equivalent to $\sigma_q^\gamma$.
\end{prop}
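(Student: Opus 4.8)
The plan is to apply Theorem~\ref{thm:obata} using the index estimates already established in Lemma~\ref{lem:obata_asmp}. The irreducibility of both $\Ind_{F_p}^F\sigma_p$ and $\Ind_{F_q}^F\sigma_q$ will follow immediately from part~(1) of Theorem~\ref{thm:obata}, since part~(2) of Lemma~\ref{lem:obata_asmp} gives exactly the hypothesis $[F_p : F_p\cap\gamma F_p\gamma^{-1}]=\infty$ for every $\gamma\in F-F_p$, and likewise for $q$. So the only real content is the classification up to equivalence.

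For the equivalence statement I would split into the two cases distinguished by Lemma~\ref{lem:obata_asmp}. First, suppose $p$ and $q$ lie in different $F$-orbits. Then part~(1) of Lemma~\ref{lem:obata_asmp} provides $[F_p : F_p\cap\gamma F_q\gamma^{-1}]=\infty$ for every $\gamma\in F$, which is precisely the hypothesis of part~(3) of Theorem~\ref{thm:obata}; hence the induced representations are inequivalent, consistent with the claim since in this case no $\gamma$ with $q=\gamma(p)$ exists. Second, suppose $p$ and $q$ are in the same orbit, so there is some $\gamma_0\in F$ with $q=\gamma_0(p)$. The strategy here is to reduce to the case $H_1=H_2$ so that part~(2) of Theorem~\ref{thm:obata} applies. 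Using the remark preceding the proposition, conjugation by $\gamma_0$ identifies $F_q=\gamma_0 F_p\gamma_0^{-1}$ and gives $\Ind_{F_q}^F\sigma_q\cong\Ind_{F_p}^F\sigma_q^{\gamma_0}$, where $\sigma_q^{\gamma_0}$ is an irreducible representation of $F_p$. Thus comparing $\Ind_{F_p}^F\sigma_p$ with $\Ind_{F_q}^F\sigma_q$ becomes a comparison of two representations induced from the same subgroup $F_p$, namely $\sigma_p$ and $\sigma_q^{\gamma_0}$.

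Now part~(2) of Theorem~\ref{thm:obata}, whose index hypothesis is supplied by part~(2) of Lemma~\ref{lem:obata_asmp}, shows these are inequivalent whenever $\sigma_p$ and $\sigma_q^{\gamma_0}$ are inequivalent. This yields the forward direction: if $\Ind_{F_p}^F\sigma_p\cong\Ind_{F_q}^F\sigma_q$, then necessarily $\sigma_p\cong\sigma_q^{\gamma_0}$, i.e. $\sigma_p\cong\sigma_q^{\gamma}$ for $\gamma=\gamma_0$ with $q=\gamma(p)$. Conversely, if such a $\gamma$ exists with $\sigma_p\cong\sigma_q^\gamma$, then $\Ind_{F_p}^F\sigma_p\cong\Ind_{F_p}^F\sigma_q^\gamma\cong\Ind_{F_q}^F\sigma_q$ by the same equivalence from the preceding remark, giving the reverse implication.

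The step I expect to require the most care is the reduction via conjugation and confirming that the characterization is well-posed independently of the choice of $\gamma_0$: one must check that $\sigma_q^\gamma$ is genuinely a representation of $F_p$ (it is, since $\gamma F_p\gamma^{-1}=F_{\gamma(p)}=F_q$ when $q=\gamma(p)$) and that the stated biconditional does not depend on which intertwining $\gamma$ one picks. The latter amounts to noting that if $q=\gamma(p)=\gamma'(p)$ then $\gamma^{-1}\gamma'\in F_p$, so $\sigma_q^{\gamma}$ and $\sigma_q^{\gamma'}$ differ by conjugation by an element of $F_p$ and are therefore equivalent; this makes the condition ``$\sigma_p\cong\sigma_q^\gamma$ for some $\gamma$ with $q=\gamma(p)$'' internally consistent. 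Everything else is a direct citation of the two lemmas and the three parts of Obata's theorem.
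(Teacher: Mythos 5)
Your proof is correct and follows essentially the same route as the paper's: irreducibility via Theorem~\ref{thm:obata}(1) with Lemma~\ref{lem:obata_asmp}(2), inequivalence across orbits via Theorem~\ref{thm:obata}(3) with Lemma~\ref{lem:obata_asmp}(1), and the same-orbit case reduced by conjugation to induction from the common subgroup $F_p$, where Theorem~\ref{thm:obata}(2) settles the biconditional. Your additional check that the criterion is independent of the choice of $\gamma$ is a harmless (and sensible) extra that the paper omits.
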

\begin{proof}
Irreducibility and nonequivalence in case when $p$ and $q$ are in different orbits of $F$ follow directly from Theorem~\ref{thm:obata} and Lemma~\ref{lem:obata_asmp}. Now suppose that $q = \gamma(p)$ for some $\gamma\in F$. Then $\Ind_{F_q}^F \sigma_q$ is equivalent to $\Ind_{F_p}^F \sigma_q^\gamma$. But $\Ind_{F_p}^F \sigma_q^\gamma$ is equivalent to $\Ind_{F_p}^F \sigma_p$ if and only if $\sigma_q^\gamma$ and $\sigma_p$ are equivalent.
\end{proof}

\subsection{The representations $\pi_s$ are not induced from finite-dimensional representations of stabilizers of points}

\begin{prop}
Let $p\in(0,1)$ and let $\sigma$ be a finite-dimensional unitary representation of $F_p$. Then for all $s\in\R$ the representations $\Ind_{F_p}^F\sigma$ and $\pi_s$ are inequivalent.
\end{prop}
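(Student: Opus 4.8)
The plan is to attach to each representation a canonical von Neumann algebra built from fixed-point subspaces of the subgroups $\Fab$, and then to show that the two algebras cannot be isomorphic. For a representation $\pi$ of $F$ and a proper closed subinterval $[a,b]\subsetneq[0,1]$, let $E_{a,b}(\pi)$ denote the orthogonal projection onto the space of $\Fab$-fixed vectors of $\pi$, and let $\mathcal A(\pi)$ be the von Neumann algebra generated by all of the $E_{a,b}(\pi)$. If $U$ is a unitary operator intertwining $\pi_s$ and $\rho:=\Ind_{F_p}^F\sigma$, then $U$ carries $\Fab$-fixed vectors to $\Fab$-fixed vectors, so $U E_{a,b}(\pi_s) U^* = E_{a,b}(\rho)$ for every $[a,b]$, and hence $U\mathcal A(\pi_s)U^* = \mathcal A(\rho)$. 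In particular $\mathcal A(\pi_s)$ and $\mathcal A(\rho)$ would be isomorphic as von Neumann algebras. I will obtain a contradiction by showing that $\mathcal A(\pi_s)$ is diffuse while $\mathcal A(\rho)$ cannot contain a diffuse abelian subalgebra.

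For $\pi_s$ the computation is immediate from the preliminaries. By Lemma~\ref{lem:pres_Lab} the space of $\Fab$-fixed vectors is exactly $L^2([0,1]\setminus[a,b])$, so $E_{a,b}(\pi_s)=1-\Pab$. Since $\M$ is generated as a von Neumann algebra by the projections $\Pab$, we get $\mathcal A(\pi_s)=\M$, which is the diffuse maximal abelian algebra $L^\infty([0,1])$; in particular it has no nonzero minimal projections.

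For $\rho$ I will use the standard realization of the induced representation on $\mathcal H_\rho=\bigoplus_{r\in O}\mathcal H_r$, where $O=F\cdot p$ is the orbit of $p$ and each fiber $\mathcal H_r$ is a copy of the space of $\sigma$, of dimension $d=\dim\sigma<\infty$; the group $F$ acts by permuting the fibers according to its action on $O$ and applying a unitary cocycle. Fix $[a,b]$. If $r\in(a,b)$, then the $\Fab$-orbit of $r$ is infinite---this follows from Lemma~\ref{lem:F_exist} exactly as in the proof of Lemma~\ref{lem:pres_Lab}, by contracting a dyadic subinterval of $(a,b)$ containing $r$---so no nonzero square-summable fixed section can have a component in such a fiber. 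If $r\notin(a,b)$, then $r$ is fixed by all of $\Fab$, and the $\Fab$-fixed vectors in $\mathcal H_r$ form a subspace of $\mathcal H_r$ (those fixed by the finite-dimensional stabilizer action). Consequently $E_{a,b}(\rho)$ preserves every fiber, i.e.\ it is block-diagonal, and therefore lies in the algebra $N$ of all bounded operators preserving each $\mathcal H_r$, that is $N\cong\ell^\infty(O,M_d(\C))$.

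It follows that $\mathcal A(\rho)\subseteq N$. But every maximal abelian subalgebra of $N$ contains the atomic center $\ell^\infty(O)$ and restricts in each fiber to a maximal abelian (hence $d$-dimensional, atomic) subalgebra of $M_d(\C)$, so it is atomic; thus $N$ contains no diffuse abelian von Neumann subalgebra. Since a unitary equivalence would make $\mathcal A(\rho)=U\mathcal A(\pi_s)U^*$ a diffuse abelian subalgebra of $N$, this is impossible, and $\pi_s$ and $\rho$ are inequivalent. The one delicate point is the identification of $E_{a,b}(\rho)$ as a fiberwise projection---equivalently, the Mackey-type decomposition of $\rho\res{\Fab}$ over the $\Fab$-orbits in $O$ together with the fact that interior orbits are infinite; once this is in place the von Neumann algebra input is routine. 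It is also precisely here that the finite-dimensionality of $\sigma$ is essential, since for $\dim\sigma=\infty$ the algebra $N$ would admit diffuse abelian subalgebras and the argument would break down.
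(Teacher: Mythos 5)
Your argument is correct, but it follows a genuinely different route from the paper's. The paper restricts both representations to the stabilizer $F_p$: since $\Res_{F_p}^F\Ind_{F_p}^F\sigma$ contains the finite-dimensional representation $\sigma$, it suffices to show that $\Res_{F_p}^F\pi_s$ contains no finite-dimensional subrepresentation, and this follows because $L^2([0,p])\oplus L^2([p,1])$ decomposes $\Res_{F_p}^F\pi_s$ into two irreducible infinite-dimensional summands (proved by rerunning the arguments of Lemmas~\ref{lem:pres_Lab} and~\ref{lem:int_diag} and Proposition~\ref{prop:irreps} on each half). You instead build a conjugation invariant out of the projections onto $\Fab$-fixed vectors: for $\pi_s$ this recovers the diffuse algebra $\M\cong L^\infty([0,1])$, while for $\Ind_{F_p}^F\sigma$ a Mackey-type orbit analysis (points of the orbit $O$ inside $(a,b)$ have infinite $\Fab$-orbits, points outside are fixed) confines the invariant to the atomic algebra $\ell^\infty(O,M_d(\C))$, and the diffuse-versus-atomic dichotomy rules out a unitary equivalence. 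Both proofs rest on Lemma~\ref{lem:pres_Lab} and on the fiber structure of the induced representation; yours avoids having to prove irreducibility of the two halves, makes transparent that finite-dimensionality of $\sigma$ enters only through atomicity of $M_d(\C)$, and in exchange needs more operator-algebra input, whereas the paper's version is shorter given the machinery it has already set up and yields the decomposition of $\Res_{F_p}^F\pi_s$ as a by-product. Two small points you should tighten: Lemma~\ref{lem:pres_Lab} gives only the inclusion of the $\Fab$-fixed vectors into $L^2([0,1]\setminus[a,b])$, the reverse inclusion being an easy direct check; and your final step implicitly uses that a von Neumann subalgebra of an atomic abelian algebra such as $\ell^\infty(O\times\{1,\dots,d\})$ can never be diffuse --- this is true and routine (given a nonzero projection $\chi_S$ in the subalgebra and a point $x\in S$, the infimum of all projections in the subalgebra whose support contains $x$ is a minimal projection of the subalgebra dominated by $\chi_S$), but it deserves an explicit line, since passing from ``every maximal abelian subalgebra of $N$ is atomic'' to ``no subalgebra of $N$ is diffuse'' is exactly the pivot of your whole argument.
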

\begin{proof}
The restriction $\Res_{F_p}^F \Ind_{F_p}^F \sigma$ contains $\sigma$. Therefore it suffices to prove that the restriction $\Res_{F_p}^F\pi_s$ does not contain a finite-dimensional subrepresentation. 

The spaces $L^2([0,p])$ and $L^2([p,1])$ are $\Res_{F_p}^F\pi_s$-invariant. We will show that
$L^2([0,1]) = L^2([0,p]) \oplus L^2([p,1])$ is a decomposition of $\Res_{F_p}^F\pi_s$ into irreducible representations. Let $\rho=(\Res_{F_p}^F\pi_s)\res{L^2([0,p])}$, and take $T\in \Int(\rho,\rho)$. If $[a,b]\subseteq[0,p]$, then $F_{a,b}\leq F_p$, and by Lemma~\ref{lem:pres_Lab} the operator T commutes with the projection $P_{a,b}\res{L^2([0,p])}$. Hence, by the same argument as in the proof of Lemma~\ref{lem:int_diag}, T is a multiplication operator given by a function $\phi\in L^\infty([0,p])$. Proceeding as in the proof of Proposition~\ref{prop:irreps} we show that $\phi$ is constant. In the same way we show that $(\Res_{F_p}^F\pi_s)\res{L^2([p,1])}$ is irreducible. Both these representations are infinite-dimensional, so their sum can't contain a finite-dimensional representation.
\end{proof}

\section{Representations of the group $T$}

The pushforward of the Lebesgue measure on $[0,1]$ through the map $\eta$ from Subsection~\ref{subsec:defFT} is quasi-invariant under the action of $T$, hence we can define representations $\rho_s$ of $T$ through formula~\eqref{eq:rep_defn}. The map $\eta$ induces a unitary isomorphism of $L^2([0,1])$ and $L^2(S^1)$, which establishes equivalence of the representations $\pi_s$ and $\Res_F^T\rho_s$. We immediately obtain the following proposition:
\begin{prop}
The representations $\rho_s$ of\/ $T$ are irreducible. The representations $\rho_s$ and $\rho_t$ are unitarily equivalent if and only if $s-t=2k\pi/\log 2$ for some $k\in\Z$.
\end{prop}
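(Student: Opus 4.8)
The plan is to reduce the statement about $\rho_s$ on $L^2(S^1)$ to the already-established facts about $\pi_s$ on $L^2([0,1])$, via the unitary isomorphism induced by $\eta$. The paragraph preceding the proposition already records that $\eta$ gives a unitary $U\colon L^2([0,1])\to L^2(S^1)$ intertwining $\pi_s$ with $\Res_F^T\rho_s$; so the substance of the proof is to leverage the containment $F\leq T$ together with the general observation made in the introduction: if the restrictions of $\rho_s$ and $\rho_t$ to a subgroup are irreducible or inequivalent, then so are $\rho_s$ and $\rho_t$ themselves.

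For irreducibility, I would argue as follows. Any operator $T\in\Int(\rho_s,\rho_s)$ is in particular an operator on $L^2(S^1)$ commuting with $\rho_s(\gamma)$ for every $\gamma\in F\leq T$. Transporting through $U$, the operator $U^{-1}TU$ lies in $\Int(\pi_s,\pi_s)$, which by Proposition~\ref{prop:irreps} consists only of scalars. Hence $T$ is scalar, and $\rho_s$ is irreducible. For non-equivalence, suppose $V\in\Int(\rho_s,\rho_t)$ is a unitary. Then $U^{-1}VU\in\Int(\pi_s,\pi_t)$, and by Proposition~\ref{prop:equiv} a nonzero intertwiner between $\pi_s$ and $\pi_t$ exists only when $s-t=2k\pi/\log 2$ for some $k\in\Z$. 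Thus $\rho_s\cong\rho_t$ forces this congruence. Conversely, when $s-t=2k\pi/\log 2$ the Radon--Nikodym factors $(d\gamma_*\mu/d\mu)^{1/2+is}$ and $(d\gamma_*\mu/d\mu)^{1/2+it}$ coincide for the $T$-action as well, so $\rho_s$ and $\rho_t$ are literally the same representation; this gives the equivalence in that direction directly.

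The conceptual core is simply that restriction to a subgroup can only shrink the commutant and can only destroy, never create, intertwiners, so irreducibility and inequivalence pass upward from $F$ to $T$. I do not expect a genuine obstacle here: everything reduces to the $F$-level results via $U$, and the only point requiring a moment's care is the converse direction of the equivalence claim, where I must check that the characters $(d\gamma_*\mu/d\mu)^{i(s-t)}$ become trivial for \emph{all} $\gamma\in T$, not merely for $\gamma\in F$. This is immediate, though, because $s-t=2k\pi/\log 2$ makes $(d\gamma_*\mu/d\mu)^{i(s-t)}=1$ whenever the slopes of $\gamma$ are integer powers of $2$, and the generators of $T$ (elements of $F$ together with the dyadic rotations in $R$) all have such slopes. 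Hence the equivalence holds at the level of $T$ and the proposition follows in both directions.
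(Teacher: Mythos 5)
Your proposal is correct and follows essentially the same route as the paper: the paper derives the proposition ``immediately'' from the unitary equivalence of $\pi_s$ and $\Res_F^T\rho_s$ induced by $\eta$, exactly as you do, with irreducibility and inequivalence passing up from the subgroup $F$ via Propositions~\ref{prop:irreps} and~\ref{prop:equiv}, and the converse direction following because all elements of $T$ have slopes in $2^{\Z}$. Your explicit check of the converse for $\gamma\in T$ via generators is a point the paper leaves implicit, but it is the same underlying argument.
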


For $p\in S^1$ denote by $T_p$ the stabilizer of $p$ under the action of $T$. By repeating the arguments of Section~\ref{sec:repind}, one can show the following analog of Proposition~\ref{prop:indreps_F} for the group $T$:
\begin{prop}\label{prop:indreps_T}
Let $p,q\in S^1$, and let $\sigma_p$ and $\sigma_q$ be irreducible finite dimensional unitary representations of $T_p$ and $T_q$, respectively. Then the representations $\Ind_{T_p}^T \sigma_p$ and $\Ind_{T_q}^T \sigma_q$ are irreducible. Moreover, they are equivalent if and only if there exists $\gamma\in T$ such that $q = \gamma(p)$ and $\sigma_p$ is equivalent to $\sigma_q^\gamma$.
\end{prop}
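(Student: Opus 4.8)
The plan is to mirror the proof of Proposition~\ref{prop:indreps_F} exactly, replacing the interval $[0,1]$ with the circle $S^1$, the group $F$ with $T$, and the stabilizers $F_p$ with $T_p$. The entire argument rests on Theorem~\ref{thm:obata} together with an analog of Lemma~\ref{lem:obata_asmp} for the group $T$, so the first and main task is to establish that infinite-index statement in the circle setting.

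\medskip

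First I would prove the analog of Lemma~\ref{lem:obata_asmp}: for $p,q\in S^1$, the index $[T_p : T_p\cap\gamma T_q\gamma^{-1}]$ is infinite whenever $p\ne\gamma(q)$ (and likewise $[T_p:T_p\cap\gamma T_p\gamma^{-1}]=\infty$ for $\gamma\in T-T_p$). As in the original proof, I would observe that $T_p\cap T_q$ is the stabilizer of $q$ under the action of $T_p$, so the index $[T_p:T_p\cap T_q]$ equals the cardinality of the orbit $T_p q$; it then suffices to show this orbit is infinite for $q\ne p$. The key point is that the action of $T$ on $S^1$ is transitive on dyadic points, and that the stabilizer $T_p$ still acts with infinite orbits on $S^1-\{p\}$. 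This follows from Lemma~\ref{lem:F_exist}: after cutting the circle at $p$ (via the map $\eta$) one reduces a statement about $T_p$-orbits to a statement about $F$-orbits on an interval, where Lemma~\ref{lem:F_exist} already gives infinitude of $F_p q$ for $q\notin\{0,p,1\}$. The one genuinely new feature is that $T$ also contains the rotations $R$, but since we are computing orbits of the \emph{stabilizer} $T_p$, and $T_p$ together with the $F$-elements still moves any $q\ne p$ to infinitely many positions, the conclusion is unchanged.

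\medskip

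Second, I would record the conjugation identity: for $\gamma\in T$ the representation $\sigma^\gamma$ of $T_p$ defined by $\sigma^\gamma(x)=\sigma(\gamma x\gamma^{-1})$ satisfies $\Ind_{\gamma T_p\gamma^{-1}}^T\sigma\cong\Ind_{T_p}^T\sigma^\gamma$, exactly as stated in the remark preceding Proposition~\ref{prop:indreps_F}; this is a general fact about induced representations and requires no change. Then the proof of Proposition~\ref{prop:indreps_T} follows verbatim from the $F$ case: irreducibility of each $\Ind_{T_p}^T\sigma_p$ follows from part~(1) of Theorem~\ref{thm:obata} applied with the circle analog of Lemma~\ref{lem:obata_asmp}(2); when $p$ and $q$ lie in different $T$-orbits, part~(3) gives inequivalence; and when $q=\gamma(p)$ we write $\Ind_{T_q}^T\sigma_q\cong\Ind_{T_p}^T\sigma_q^\gamma$ and apply part~(2) to conclude that this is equivalent to $\Ind_{T_p}^T\sigma_p$ precisely when $\sigma_q^\gamma\cong\sigma_p$.

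\medskip

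I expect the main obstacle to be the orbit-infinitude step for $T_p$, specifically handling the point $p=1$ (the base point fixed by $F$) and, more generally, ensuring that cutting the circle at $p$ legitimately reduces the $T_p$-orbit computation to the $F$-orbit computation covered by Lemma~\ref{lem:F_exist}. The subtlety is that elements of $T_p$ need not fix the cut point $1\in S^1$, so the identification of $T_p$ with a group of interval homeomorphisms requires rotating $p$ to $1$ first; once that normalization is done the argument is routine, and every other step is a direct transcription of the $F$-case proofs.
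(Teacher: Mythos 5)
Your overall route --- establish a $T$-analog of Lemma~\ref{lem:obata_asmp}, then run Theorem~\ref{thm:obata} together with the conjugation identity exactly as in Proposition~\ref{prop:indreps_F} --- is precisely what the paper intends (its proof is the single sentence that one repeats the arguments of Section~\ref{sec:repind}). The gap is in your proof of the new orbit lemma. Your reduction to the interval case goes through the inclusion $F_p\leq T_p$, which gives $\card{T_pq}\geq\card{F_pq}=\infty$ only for $q\notin\set{p,1}$: it says nothing when $q=1=\eta(0)$, because every element of $F$, hence of $F_p$, fixes $1$, so $F_p\cdot 1=\set{1}$. This case cannot be discarded: to prove, say, that $\Ind_{T_p}^T\sigma_p$ and $\Ind_{T_1}^T\sigma_1$ are inequivalent for $p$ non-dyadic, Theorem~\ref{thm:obata}(3) with $H_1=T_p$, $H_2=T_1$ demands $[T_p:T_p\cap\gamma T_1\gamma^{-1}]=\card{T_p\cdot\gamma(1)}=\infty$ for \emph{every} $\gamma\in T$, and $\gamma=\mathrm{id}$ requires exactly that the orbit $T_p\cdot 1$ be infinite. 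Your proposed repair, ``rotating $p$ to $1$ first,'' fails in exactly this situation: $T$ preserves the set $\eta(\Z[1/2])$ of dyadic points and $1$ is dyadic, so for non-dyadic $p$ no element of $T$ (in particular no rotation in $R$, whose angles are dyadic) maps $p$ to $1$; conjugating instead by a rotation outside $T$ turns $T_p$ into the point stabilizer of a \emph{shifted} copy of $T$, to which Lemma~\ref{lem:F_exist} no longer applies.

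The repair is local. For any $q\neq p$ (including $q=1$) choose a closed arc $A\subseteq S^1$ with dyadic endpoints such that $q$ lies in the interior of $A$ and $p\notin A$. The subgroup of $T$ consisting of elements supported in $A$ fixes $p$, hence lies in $T_p$; moreover, it is conjugate, by a dyadic rotation in $R\leq T$ moving $A$ onto an arc avoiding $1$, to a subgroup of the form $F_{a,b}\leq F$, and such a group has infinite orbits on the interior of its supporting interval by Lemma~\ref{lem:F_exist} (the same use of that lemma as in Lemma~\ref{lem:obata_asmp}). Hence $\card{T_pq}=\infty$ for every $q\neq p$, with no normalization of $p$ needed. (Alternatively, in the mixed dyadic/non-dyadic comparison one can swap the roles of $H_1$ and $H_2$ in Theorem~\ref{thm:obata}(3), so that the orbit being computed is that of $T_q$, which for dyadic $q$ is conjugate to $F$ inside $T$.) With this corrected orbit lemma, the rest of your argument --- the conjugation identity and the three applications of Theorem~\ref{thm:obata} --- goes through verbatim.
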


Besides the stabilizers of points of $S^1$, there exists one more natural subgroup of $T$, namely the group of rotations $R$ with angles in $\pi\Z[1/2]$. 

\begin{lem}\label{lem:obata_asmp_T}\
\begin{enumerate}
\item $[R : R \cap \gamma T_p \gamma^{-1}] = \infty$ for every $\gamma\in T$ and $p\in S^1$.
\item $[R : R \cap \gamma R \gamma^{-1}] = \infty$ for every $\gamma\in T - R$.
\end{enumerate}
\end{lem}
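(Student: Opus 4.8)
The plan is to exploit the fact that $R$ is, up to isomorphism, a very small group. Under the parametrization $\eta(t)=e^{2\pi it}$, a rotation with angle in $\pi\Z[1/2]$ is the map $t\mapsto t+d$ of $\R/\Z$ with $d\in\Z[1/2]$, so $R\cong\Z[1/2]/\Z$, the Pr\"ufer $2$-group. Every proper subgroup of the Pr\"ufer group is finite (cyclic of $2$-power order), so its \emph{only} infinite subgroup is the whole group. Consequently, for any subgroup $H\leq T$ and any $\gamma\in T$, the subgroup $R\cap\gamma H\gamma^{-1}$ of $R$ has index either $1$ or $\infty$, and the index is finite precisely when $R\subseteq\gamma H\gamma^{-1}$. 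Both parts of the lemma thus reduce to showing that the corresponding inclusion fails.

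For part (1), I would observe that $R\subseteq\gamma T_p\gamma^{-1}$ is equivalent to $\gamma^{-1}r\gamma(p)=p$ for every $r\in R$, i.e.\ to every rotation in $R$ fixing the point $\gamma(p)$. Since a nontrivial rotation of $S^1$ has no fixed point, and $R$ contains nontrivial rotations (e.g.\ the half-turn $t\mapsto t+1/2$), this is impossible. Hence $R\cap\gamma T_p\gamma^{-1}$ is a proper, therefore finite, subgroup of $R$, and $[R:R\cap\gamma T_p\gamma^{-1}]=\infty$.

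For part (2), suppose $\gamma\in T-R$ and $R\subseteq\gamma R\gamma^{-1}$. As $\gamma R\gamma^{-1}$ is again a Pr\"ufer group and $R$ is an infinite subgroup of it, this forces $R=\gamma R\gamma^{-1}$; that is, $\gamma$ normalizes $R$. I would then rule this out by proving $N_T(R)=R$. The key tool is the Poincar\'e rotation number, which is invariant under conjugation by orientation-preserving homeomorphisms and equals $d$ for the rotation $t\mapsto t+d$. Since $\gamma\in T$ is orientation-preserving and $\gamma r\gamma^{-1}\in R$ for each $r\in R$, comparing rotation numbers gives $\gamma r\gamma^{-1}=r$; thus $\gamma$ commutes with every dyadic rotation. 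Writing this as $\gamma(t+d)=\gamma(t)+d$ and setting $t=0$ yields $\gamma(d)=\gamma(0)+d$ for all dyadic $d$, so by density of $\Z[1/2]/\Z$ and continuity of $\gamma$ we get $\gamma(t)=\gamma(0)+t$ for all $t$. Hence $\gamma$ is a rotation, and being in $T$ it is a dyadic rotation, so $\gamma\in R$, contradicting $\gamma\in T-R$.

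The main obstacle is the normalizer computation in part (2): the abstract automorphism group of the Pr\"ufer $2$-group is large (the group of $2$-adic units), so normalizing $R$ does not by itself force centralizing it. The decisive geometric input is orientation preservation, encoded through the conjugation-invariance of the rotation number, which collapses the conjugation automorphism of $R$ to the identity. Once the Pr\"ufer structure is in place, everything else is routine.
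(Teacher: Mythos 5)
Your proof is correct, but it takes a genuinely different route from the paper's, most visibly in part (2). For part (1) the paper argues via orbit--stabilizer: since $\gamma T_p\gamma^{-1}=T_{\gamma(p)}$ and $R$ acts freely on $S^1$, the intersection $R\cap T_{\gamma(p)}$ is the (trivial) stabilizer of $\gamma(p)$ in $R$, so the index equals the cardinality of the infinite orbit $R\gamma(p)$; your Pr\"ufer dichotomy plus the no-fixed-point observation is the same underlying fact in different clothing. For part (2), however, the paper stays inside its representation-theoretic framework: it identifies $R$ as the stabilizer of the constant function $1$ under $\rho_0$ (the rotations being exactly the measure-preserving elements of $T$), so that $\gamma R\gamma^{-1}$ becomes the stabilizer of the Radon--Nikodym derivative $f=(d\gamma_*\mu/d\mu)^{1/2}$, which for $\gamma\in T-R$ is a nonconstant simple function with infinite $R$-orbit, and again the index is an orbit cardinality. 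You instead use the subgroup structure of the Pr\"ufer $2$-group to reduce both parts to a containment question, and then settle part (2) by proving $N_T(R)=R$ via rotation-number rigidity. Each approach has its merits: the paper's is uniform with its other orbit--stabilizer arguments (Lemma~\ref{lem:obata_asmp}) and needs no structure theory of $R$, while yours is self-contained, yields the sharper dichotomy that the index is always $1$ or $\infty$, and establishes the stronger structural fact that $R$ is self-normalizing in $T$ --- indeed that any orientation-preserving homeomorphism of $S^1$ commuting with all dyadic rotations is itself a rotation. You also correctly flag the one real subtlety in your route: normalizing a Pr\"ufer group does not abstractly imply centralizing it (its automorphism group is the $2$-adic units), so the conjugation-invariance of the rotation number is the essential geometric input. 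As a minor simplification, your part (1) could be shortened to the paper's one-line orbit--stabilizer remark, but as written it is sound.
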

\begin{proof}
Since $\gamma T_p\gamma^{-1} = T_{\gamma(p)}$, and $R$ acts freely on $S^1$, we obtain (1). For the proof of (2) consider the action of T on $L^2(S^1)$ through the representation $\rho_0$. Then $R$ is the stabilizer $T_1$ of the function $1\in L^2(S^1)$, and $\gamma R \gamma^{-1}$ equals the stabilizer $T_f$ of the function $f = \rho_0(\gamma)1 = (d\gamma_*\mu/d\mu)^{1/2}$. For $\gamma\in T-R$ it is a non-constant simple function on $S^1$, hence its orbit under the action of $R$ is infinite, and so is the index $[R : R\cap \gamma R \gamma^{-1}]$.
\end{proof}

\begin{prop}
Take $p\in S^1$ and let $\rho$ and $\sigma$ be finite-dimensional unitary representations of $R$ and $T_p$, respectively. Then
\begin{enumerate}
\item the representation $\Ind_R^T\rho$ is irreducible,
\item the representations $\Ind_R^T\rho$ and $\Int_{T_p}^T\sigma$ are inequivalent,
\item for any $s\in\R$ the representations $\rho_s$ and $\Ind_R^T\rho$ are inequivalent.
\end{enumerate}
\end{prop}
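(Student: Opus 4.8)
The plan is to treat the first two parts as direct applications of Theorem~\ref{thm:obata} together with Lemma~\ref{lem:obata_asmp_T}, and to reserve the real work for part (3), which I would prove by restricting both representations to $F$.

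For part (1) I would invoke Theorem~\ref{thm:obata}(1) with $G=T$ and $H_1=R$: the hypothesis $[R:R\cap\gamma R\gamma^{-1}]=\infty$ for every $\gamma\in T-R$ is exactly Lemma~\ref{lem:obata_asmp_T}(2), so $\Ind_R^T\rho$ is irreducible. For part (2) I would apply Theorem~\ref{thm:obata}(3) with $H_1=R$ and $H_2=T_p$; the needed hypothesis $[R:R\cap\gamma T_p\gamma^{-1}]=\infty$ for every $\gamma\in T$ is Lemma~\ref{lem:obata_asmp_T}(1), giving inequivalence of $\Ind_R^T\rho$ and $\Ind_{T_p}^T\sigma$.

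The substance is part (3). Since $\rho_s$ and $\Ind_R^T\rho$ are both irreducible (the former by hypothesis, the latter by part (1)), I cannot separate them by comparing reducibility of the representations themselves; and restricting to $R$ is useless, because $R$ acts by measure-preserving rotations, so $\Res_R^T\rho_s$ splits into the one-dimensional Fourier modes and already contains finite-dimensional subrepresentations, exactly as $\Res_R^T\Ind_R^T\rho$ does. The idea is therefore to restrict to $F$ instead, where the structures diverge. Recall that $\Res_F^T\rho_s\cong\pi_s$, which is irreducible by Proposition~\ref{prop:irreps}; so it suffices to show that $\Res_F^T\Ind_R^T\rho$ is \emph{reducible}. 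The key structural fact is that $T=FR$ while $F\cap R=\{e\}$: every $t\in T$ sends the dyadic point $1$ to a dyadic point, and $R$ acts transitively on the dyadic points of $S^1$, so one can pick $r\in R$ with $r^{-1}(1)=t^{-1}(1)$, whence $tr^{-1}\in T_1=F$; freeness of the rotation action gives $F\cap R=\{e\}$. Consequently $F\hookrightarrow T$ induces a bijection $F\to T/R$, and $\Res_F^T\Ind_R^T\rho$ is carried by $\ell^2(F,V_\rho)$ with $F$ acting by left translation, i.e. it is isomorphic to $\dim\rho$ copies of the left regular representation $\lambda_F$ of $F$.

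Since $F$ is infinite, $\lambda_F$, and hence any positive multiple of it, is reducible, whereas $\pi_s$ is irreducible. As equivalent representations have equivalent restrictions, $\rho_s\cong\Ind_R^T\rho$ would force $\pi_s\cong\lambda_F^{\oplus\dim\rho}$, a contradiction; so they are inequivalent for every $s\in\R$. The step requiring genuine care is the factorization $T=FR$ and the ensuing identification of $\Res_F^T\Ind_R^T\rho$ with a multiple of the regular representation of $F$: this is where the special nature of the rotation subgroup $R$ (in contrast to a point stabilizer) enters and forces the replacement of the restriction-to-the-inducing-subgroup argument, used for point stabilizers earlier, by a restriction-to-$F$ argument.
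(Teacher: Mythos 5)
Your proof is correct, and while your parts (1) and (2) coincide with the paper's (the same applications of Theorem~\ref{thm:obata} and Lemma~\ref{lem:obata_asmp_T}), your part (3) takes a genuinely different --- and in fact sounder --- route. The paper proves (3) by restricting to $T_p$: it asserts that it suffices to show $\Res_{T_p}^T\rho_s$ contains no finite-dimensional subrepresentation, and checks this via $\Res_{F_p}^{T_p}\Res_{T_p}^T\rho_s=\Res_{F_p}^F\pi_s$. But that reduction presupposes that $\Res_{T_p}^T\Ind_R^T\rho$ \emph{does} contain a finite-dimensional subrepresentation, which is the standard containment only when one restricts to the inducing subgroup itself (here $R$, just as $F_p$ was used in the earlier proposition about $\pi_s$); for $T_p$ it fails: by the very freeness argument you invoke, $T_p\cap\gamma R\gamma^{-1}=\{e\}$ for all $\gamma\in T$, so $\Res_{T_p}^T\Ind_R^T\rho$ decomposes over the $T_p$-orbits on $T/R$ into copies of $\lambda_{T_p}\otimes V_\rho$, and since $T_p$ is infinite this contains no finite-dimensional subrepresentation at all. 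Thus the paper's stated criterion cannot separate the two representations, whereas your restriction to $F$ does: the factorization $T=FR$ with $F\cap R=\{e\}$ (correctly justified by transitivity of $R$ on dyadic points and freeness of rotations) identifies $\Res_F^T\Ind_R^T\rho$ with $\lambda_F^{\oplus\dim\rho}$, which is reducible because $F$ is nontrivial (its commutant contains the right translations), while $\Res_F^T\rho_s\cong\pi_s$ is irreducible by Proposition~\ref{prop:irreps}; equivalent representations have equivalent restrictions, so (3) follows. Your preliminary remark that restriction to $R$ is useless (the Fourier modes give one-dimensional subrepresentations of $\Res_R^T\rho_s$) is also correct and explains why a subgroup other than $R$ must be used. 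One caveat you inherit from the statement rather than from your own argument: Theorem~\ref{thm:obata} requires $\rho$ and $\sigma$ to be irreducible, so irreducibility must be implicitly assumed in (1) and (2) (otherwise $\Ind_R^T\rho$ is trivially reducible); this imprecision is the paper's, and it affects both proofs equally.
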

\begin{proof}
The proofs of (1) and (2) follow from Theorem~\ref{thm:obata} and Lemma~\ref{lem:obata_asmp_T}. To prove (3) it suffices to show that $\Res_{T_p}^T\rho_s$ does not contain a finite-dimensional representation. But, treating $F$ as a subgroup of $T$ we have
\begin{equation}
    \Res_{F_p}^{T_p} \Res_{T_p}^T\rho_s = \Res_{F_p}^F\pi_s,
\end{equation}
and we already know that this restriction, and therefore also $\Res_{T_p}^T\rho_s$, do not contain a finite-dimensional representation.
\end{proof}

\bibliographystyle{amsplain}
\bibliography{thompson2}

\end{document}